\numberwithin{equation}{section}
\newcounter{marnote}
\def \dis {\displaystyle}
\def \into {\int_\Omega}
\def \confai {-\kern -.5em\rightharpoonup}
\def \cqfd {\hfill$\Box$}
\def \al {\alpha}
\def \be {\beta}
\def \ga {\gamma}
\def \ep {\varepsilon}
\def \Om {\Omega}
\def \la {\lambda}
\def \RR {\mathbb R}
\def\A{\mathcal{A}}
\def\E{\mathcal{E}}
\def \M {\mathscr{M}}
\def \beq {\begin{equation}}
\def \eeq {\end{equation}}
\def \ba {\begin{array}}
\def \ea {\end{array}}
\def \ecart {\noalign{\medskip}}
\DeclareMathOperator{\dive}{div}
\DeclareMathOperator{\spt}{spt}
\newtheorem{Thm}{Theorem}[section]
\newtheorem{Adef}[Thm]{Definition}
\newtheorem{Arem}[Thm]{Remark}
\newenvironment{Rem}{\begin{Arem}\rm}{\end{Arem}}
\newtheorem{Aexa}[Thm]{Example}
\newenvironment{Exa}{\begin{Aexa}\rm}{\end{Aexa}}
\newtheorem{Anot}[Thm]{Notation}
\def \reff #1.{figure~\ref{#1}}
\def \refs #1.{Section~\ref{#1}}
\def \refss #1.{Subsection~\ref{#1}}
\def \refD #1.{Definition~\ref{#1}}
\def \refT #1.{Theorem~\ref{#1}}
\def \refL #1.{Lemma~\ref{#1}}
\def \refC #1.{Corollary~\ref{#1}}
\def \refP #1.{Proposition~\ref{#1}}
\def \refPt #1.{Properties~\ref{#1}}
\def \refR #1.{Remark~\ref{#1}}
\def \refE #1.{Example~\ref{#1}}
\def \refN #1.{Notation~\ref{#1}}
\title{{\bf Optimal coefficients for elliptic PDEs}}
\begin{document}
\maketitle
{\it Dedicated to the memory of Hedy Attouch}\bigskip

\centerline{G. Buttazzo$^\dag$,\hskip 0.3cm J. Casado-D\'{\i}az$^{\dag\dag}$,\hskip 0.3cm F. Maestre$^{\dag\dag}$}
\bigskip 
\centerline{{$^\dag$} Dipartimento di Matematica, Universit\`a di Pisa,}
\centerline{Largo B. Pontecorvo, 5}
\centerline{56127 Pisa, ITALY}
\bigskip
\centerline{$^{\dag\dag}$ Dpto. de Ecuaciones Diferenciales y An\'alisis Num\'erico,}
\centerline{Facultad de Matem\'aticas, C. Tarfia s/n}
\centerline{41012 Sevilla, SPAIN}
\bigskip
\centerline{e-mail: giuseppe.buttazzo@unipi.it, jcasadod@us.es,
fmaestre@us.es}

\begin{abstract}
We consider an optimization problem related to elliptic PDEs of the form $-\dive(a(x)\nabla u)=f$ with Dirichlet boundary condition on a given domain $\Om$. The coefficient $a(x)$ has to be determined, in a suitable given class of admissible choices, in order to optimize a given criterion. We first deal with the case when the cost is the so-called elastic compliance, and then we discuss the more general case when the problem is written as an optimal control problem.
\end{abstract}

\textbf{Keywords: }shape optimization, optimal coefficients, regularity, optimal control problems.

\medskip

\textbf{2020 Mathematics Subject Classification: }49Q10, 49J45, 35B65, 35R05, 49K20.

\section{Introduction}\label{intro}

Various quantities are involved in the study of elliptic PDEs, which we often refer to as {\it data}. In particular, in several situations the coefficients of an elliptic operator have to be determined in order to optimize a given cost functional. In all the paper $\Om$ is a given bounded open subset of $\RR^d$ and $H^1_0(\Om)$ is the usual Sobolev space of functions with zero boundary trace.

We give here below a presentation of the optimization problem, and in Section \ref{snum} we provide some numerical simulation. Other kinds of optimization problems for elliptic PDEs, namely the optimal choice of lower order terms, together with their regularity, have been considered in \cite{BCM}, \cite{Mon}, \cite{Sha}.

\subsection{Position of the problem}\label{ss1}
We consider the problem of minimizing a cost functional of the form
$$J(u)=\int_\Om j(x,u)\,dx,$$
where $j(x,s)$ is a suitable cost integrand with the appropriate growth conditions, and $u$ is the solution of the elliptic equation
\beq\label{pde}
\begin{cases}
-\dive\big(a(x)\nabla u\big)=f&\hbox{in }\Om\\
u\in H^1_0(\Om).
\end{cases}\eeq
Here the right-hand side $f\in L^2(\Om)$ is prescribed, while the coefficient $a$ has to be chosen in a suitable admissible class $\A$ in order to minimize the functional $J$ above. The problem is then an optimal control problem, where $u$ is the state variable, $a$ the control variable, $J$ the cost functional, and \eqref{pde} is the state equation. This amounts then to the problem
$$\min\big\{J(u)\ :\ u\hbox{ solves \eqref{pde}, }a\in\A\big\}.$$
The admissible class $\A$ is usually given in the form
$$\A=\Big\{a\ge0\ :\ \int_\Om\psi(a)\,dx\leq 1\Big\}.$$\par
A simpler way to impose the constraint on $a$ is to write the problem in the form
\beq\label{pbor}\min_{a\ge0}\min_{u\in H^1_0(\Om)}\Big\{\int_\Om\big(j(x,u)+\lambda\psi(a)\big)\,dx\ :\ u\hbox{ solves \eqref{pde}}\Big\}.\eeq
where $\lambda>0$ plays the role of a Lagrange multiplier. Moreover, we will assume $\psi$ convex and non-negative. Replacing $\psi$ by $\lambda\psi$ we can also assume $\lambda=1$.

\section{The compliance and energy problems.}\label{secPar}
A particular case of the problem considered above occurs when the cost $J$ is the so-called {\it compliance}, that is
$$j(x,u)= f(x)u.$$
In this case an easy integration by parts transforms the problem in the max/min problem
$$\max_{a\ge0}\min_{u\in H^1_0(\Om)}\int_\Om\Big(\frac12 a(x)|\nabla u|^2-f(x)u-\frac12\psi(a)\Big)\,dx.$$\par
We first assume that $\psi$ is superlinear, that is
\beq\label{superl}
\lim_{s\to+\infty}\frac{\psi(s)}{s}=+\infty,
\eeq
and we set
$$\E(a)=\inf_{u\in C^1_0(\Om)}\int_\Om\Big(\frac12 a(x)|\nabla u|^2-f(x)u-\frac12\psi(a)\Big)\,dx.$$
Concerning the right-hand side $f$, in several problems some concentration phenomena for data occur, so we simply require that the right-hand side $f$ is a signed measure.

\begin{Thm}\label{exL1}
Under assumption \eqref{superl}, the functional $\E(a)$ admits a maximizer $a_{opt}\in L^1(\Om)$, provided the right-hand side $f$ is such that $\E(a)>-\infty$ for at least a coefficient $a\in L^1(\Om)$.
\end{Thm}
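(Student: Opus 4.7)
The plan is the direct method in the calculus of variations: produce a maximising sequence, show it is weakly compact in $L^1(\Om)$ thanks to the superlinearity of $\psi$, and verify that $\E$ is weakly upper semicontinuous in $L^1(\Om)$.

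First I would establish the \emph{a priori} bound. By hypothesis there exists $\bar a\in L^1(\Om)$ with $\E(\bar a)>-\infty$, so $\sup_{a\ge 0}\E(a)\ge\E(\bar a)>-\infty$; moreover plugging the admissible choice $u\equiv 0$ into the infimum defining $\E(a)$ gives at once $\E(a)\le -\tfrac12\int_\Om\psi(a)\,dx\le 0$. Picking a maximising sequence $a_n\ge 0$ with $\E(a_n)\to\sup_{a\ge 0}\E(a)$, the two inequalities combine into
$$\int_\Om\psi(a_n)\,dx\le -2\E(a_n)\le -2\E(\bar a)+O(1),$$
so $\{\psi(a_n)\}$ is bounded in $L^1(\Om)$. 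The superlinearity assumption \eqref{superl}, through the de la Vall\'ee Poussin/Dunford--Pettis criterion, then yields that $\{a_n\}$ is equi-integrable and bounded in $L^1(\Om)$, hence weakly relatively compact there; extracting a subsequence (not relabelled) I obtain $a_n\rightharpoonup a_{opt}$ in $L^1(\Om)$, with $a_{opt}\ge 0$ a.e.\ by passage to the weak limit of non-negative functions.

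The second step is the upper semicontinuity of $\E$. For each fixed $u\in C^1_0(\Om)$, the map
$$F_u(a):=\int_\Om\Big(\tfrac12 a|\nabla u|^2-fu-\tfrac12\psi(a)\Big)\,dx$$
is weakly upper semicontinuous on $L^1(\Om)$: the linear term is weakly continuous because $|\nabla u|^2\in L^\infty(\Om)$, the pairing $\int fu$ does not depend on $a$, and $-\tfrac12\int\psi(a)\,dx$ is weakly u.s.c.\ by the classical lower semicontinuity of convex integral functionals. An infimum of weakly u.s.c.\ functionals is weakly u.s.c., so $\E=\inf_{u\in C^1_0(\Om)}F_u$ is itself weakly u.s.c.\ on $L^1(\Om)$, which gives
$$\E(a_{opt})\ge\limsup_n\E(a_n)=\sup_{a\ge 0}\E(a),$$
and $a_{opt}$ is the required maximiser.

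The one mildly delicate ingredient is the \emph{a priori} bound in the first step: it relies crucially on the admissibility of the test $u\equiv 0$ in the variational definition of $\E$ together with $\psi\ge 0$; the rest is textbook. Note that the low regularity of $f$ (a signed measure rather than an $L^2$ function) plays no role in this argument, because for each fixed $u\in C^1_0(\Om)$ the quantity $\int fu$ enters $F_u$ only as an additive constant in the variable $a$.
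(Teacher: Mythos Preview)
Your proof is correct and follows essentially the same approach as the paper: test with $u\equiv 0$ to bound $\int_\Om\psi(a_n)\,dx$, use the superlinearity of $\psi$ and the de la Vall\'ee Poussin/Dunford--Pettis criterion for weak $L^1$ compactness, and conclude by the weak $L^1$ upper semicontinuity of $\E$ as an infimum of weakly u.s.c.\ functionals. Your write-up is somewhat more detailed (in particular, your remark that the low regularity of $f$ is irrelevant because $\int u\,df$ is constant in $a$ is a helpful clarification), but the argument is the same.
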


\begin{proof}
Since for every $u\in C^1_0(\Om)$ the map
$$a\mapsto\int_\Om\Big(\frac12 a(x)|\nabla u|^2-\frac12\psi(a)\Big)\,dx-\int u\,df$$
is weakly $L^1(\Om)$ upper semicontinuous, the functional $\E(a)$ is weakly $L^1(\Om)$ upper semicontinuous, being the infimum of a family of upper semicontinuous functions. In addition, testing with $u=0$, we have
$$\E(a)\le-\frac12\int_\Om\psi(a)\,dx.$$
Hence, by the superlinearity of $\psi$ and by the well-known weak $L^1(\Om)$ compactness theorem, the existence of an optimal coefficient $a_{opt}$ is easily established by means of the direct methods of the calculus of variations.
\end{proof}

The case when $\psi$ has only a linear growth:
\beq\label{ling}
\lim_{s\to+\infty}\frac{\psi(s)}{s}=k>0
\eeq
is more delicate. In fact, in this case a more careful definition of the integrals
$$\int_\Om|\nabla u|^2\,da\qquad\hbox{and}\qquad\int_\Om\psi(a)$$
is needed. We refer to \cite{BGL} for more details about this case, which has strong links with the theory of optimal transportation, as first shown in \cite {BBS} and \cite{BB}. However, by an argument similar to the one of Theorem \ref{exL1}, an optimal coefficient $a_{opt}$ still exists, but in the larger class $\M(\Om)$ of nonnegative measures on $\Om$, as stated below.

\begin{Thm}\label{exM}
Under assumption \eqref{ling}, the functional $\E(a)$ admits a maximizer $a_{opt}$ in the class $\M(\Om)$, provided the right-hand side $f$ is such that $\E(a)>-\infty$ for at least a coefficient $a\in\M(\Om)$.
\end{Thm}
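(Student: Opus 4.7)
My plan is to mirror the direct-method argument of Theorem \ref{exL1}, replacing weak $L^1$ compactness by weak-$*$ compactness of nonnegative Radon measures. A preparatory step is to give meaning to $\E(a)$ for $a\in\M(\Om)$: the energy $\int_\Om|\nabla u|^2\,da$ is defined by duality, since for $u\in C^1_0(\Om)$ the integrand $|\nabla u|^2$ belongs to $C_c(\Om)$; and $\int_\Om\psi(a)$ is defined via the recession formula
$$\int_\Om\psi(a)=\int_\Om\psi(a^{ac}(x))\,dx+k\,a^s(\Om),$$
where $a^{ac}$ and $a^s$ are the absolutely continuous and singular parts of $a$ and $k>0$ is the constant in \eqref{ling}. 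This is the relaxation studied in \cite{BGL}, \cite{BBS}, \cite{BB}.

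I would next establish weak-$*$ upper semicontinuity of $\E$ on $\M(\Om)$. For fixed $u\in C^1_0(\Om)$ the map
$$a\mapsto\frac12\int_\Om|\nabla u|^2\,da-\int_\Om u\,df-\frac12\int_\Om\psi(a)$$
is weakly-$*$ upper semicontinuous: the first term is continuous because $|\nabla u|^2\in C_c(\Om)$, and the $\psi$-term is lower semicontinuous by the Reshetnyak/Goffman--Serrin theorem for convex integrands extended via their recession function. An infimum of upper semicontinuous functions being upper semicontinuous, this gives the claim. For compactness of a maximizing sequence $\{a_n\}$, the test $u\equiv0$ yields $\E(a_n)\le-\tfrac12\int_\Om\psi(a_n)$, so $\int_\Om\psi(a_n)$ is uniformly bounded. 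Using $\psi\ge 0$ together with \eqref{ling} one has $\psi(s)\ge \tfrac{k}{2}s-C$ for a suitable $C$, which converts the bound on $\int_\Om\psi(a_n)$ into a uniform bound on the total masses $a_n(\Om)$. Banach--Alaoglu in the dual of $C(\overline\Om)$ then extracts a subsequence weakly-$*$ convergent to some $\tilde a\in\M_+(\overline\Om)$, and I would set $a_{opt}$ equal to the restriction of $\tilde a$ to $\Om$.

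The genuinely new point compared with Theorem \ref{exL1}, and the main obstacle, is the possibility that part of the mass of $a_n$ escapes to $\partial\Om$, so that $a_{opt}$ may have strictly smaller mass than $\tilde a$. This is harmless precisely because every admissible $u$ vanishes near the boundary: $|\nabla u|^2$ extends by zero to $\overline\Om$, so the energy term does not see the boundary mass, while restricting to $\Om$ can only decrease $\int\psi$, giving
$$\int_\Om\psi(a_{opt})\le\int_{\overline\Om}\psi(\tilde a)\le\liminf_n\int_\Om\psi(a_n).$$
Hence for every $u\in C^1_0(\Om)$,
$$\limsup_n\E(a_n)\le\frac12\int_\Om|\nabla u|^2\,da_{opt}-\int_\Om u\,df-\frac12\int_\Om\psi(a_{opt}),$$
and taking the infimum over $u$ delivers $\limsup_n\E(a_n)\le\E(a_{opt})$, showing that $a_{opt}$ is the sought maximizer.
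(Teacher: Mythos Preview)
Your proposal is correct and follows precisely the approach indicated in the paper, which does not spell out a proof of Theorem~\ref{exM} but simply states that it follows ``by an argument similar to the one of Theorem~\ref{exL1}'' with the optimal coefficient now lying in $\M(\Om)$. You have faithfully carried out that program---replacing weak $L^1$ compactness by weak-$*$ compactness of nonnegative measures, invoking Reshetnyak-type lower semicontinuity for the convex integrand $\psi$ extended by its recession slope $k$, and correctly handling the possible loss of mass to $\partial\Om$ via the observation that test functions $u\in C^1_0(\Om)$ make $|\nabla u|^2$ vanish near the boundary---so there is nothing to add.
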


It is interesting to characterize the optimal coefficient $a_{opt}$ in terms of some suitable auxiliary variational problem. Thanks to a well-known result from min/max theory that allows to exchange the order of inf and sup, due to the convexity with respect to the variable $u$ and the concavity with respect to the variable $a$ (see for instance \cite{Cl} and \cite{Ek}), the initial problem becomes
\beq\label{formco}\inf_{u\in C^1_0(\Om)}\sup_{a\ge0}\int_\Om\Big(\frac12 a(x)|\nabla u|^2-\frac12\psi(a)\Big)\,dx-\int u\,df.\eeq
The supremum with respect to $a$ can be now easily computed:
$$\sup_{a\ge0}\int_\Om\Big(\frac12 a(x)|\nabla u|^2-\frac12\psi(a)\Big)\,dx-\int u\,df=\int_\Om\frac12 \psi^*\big(|\nabla u|^2\big)\,dx-\int u\,df,$$
where $\psi^*$ is the Legendre-Fenchel conjugate function of $\psi$. The auxiliary variational problem is then
\beq\label{foruCo}
\inf_{u\in C^1_0(\Om)}\int_\Om\frac12 \psi^*\big(|\nabla u|^2\big)\,dx-\int u\,df.
\eeq
Since $\psi^*(t)\ge t-\psi(1)$ it is easy to see that, at least when $f\in H^{-1}(\Om)$, the auxiliary variational problem admits a solution $\bar u\in H^1_0(\Om)$. Moreover, if $\psi$ is strictly increasing, then the function $s\mapsto\psi^*(s^2)$ is strictly convex and therefore $\bar u$ is unique. The optimal coefficient $a_{opt}$ can now be recovered through the optimality condition
\beq\label{conopC} a_{opt}|\nabla\bar u|^2=\psi(a_{opt})+\psi^*(|\nabla\bar u|^2).\eeq

\begin{Rem}\label{measures}
In some situations it is important to allow the right-hand side $f$ to be singular, for instance with concentrations on regions of lower dimensions. In general we can assume that $f\in\M$, the class of measures with finite mass; even if for some choice of the coefficient $a$ we may have $\E(a)=-\infty$, the optimal compliance problem is still meaningful, because these ``bad'' coefficients are ruled out by the optimization criterion, consisting in maximizing $\E(a)$. Moreover, all the arguments also apply to the case when, instead of having a boundary Dirichlet condition, we have the Neumann one, assuming as usual that the right-hand side $f$ has zero average.
\end{Rem}

\begin{Rem}\label{regul}
When $\psi(s)=\gamma s$, using the equivalence between coefficient optimization and optimal transport problem, pointed out in \cite{BB}, the following summability properties for the optimal coefficient $a_{opt}$ have been obtained (see \cite{PEP}):
\[\begin{split}
&f\in\M\quad\implies\quad\mu_{opt}\in\M\text{ possibly not unique;}\\
&f\in L^1(\Om)\quad\implies\quad\mu_{opt}\in L^1(\Om)\text{ and is unique;}\\
&f\in L^p(\Om)\quad\implies\quad\mu_{opt}\in L^p(\Om)\text{ for every }p\in[1,+\infty];\\
&\spt(\mu_{opt})\subset\text{convex envelope of }
\begin{cases}
\spt(f)&\text{in the Neumann case}\\
\spt(f)\cup\partial\Om&\text{in the Dirichlet case.}
\end{cases}
\end{split}\]
In addition, a mild $BV$ and $W^{1,1}$ regularity for $\mu_{opt}$ is available in some cases in dimension two. More precisely, when $d=2$ and under some additional assumptions on the regularity of $\Om$ and on the behavior of the datum $f$, we have (see \cite{Dw}):
\[\begin{split}
&f\in BV(\Om)\cap L^\infty(\Om)\quad\implies\quad\mu_{opt}\in BV(\Om),\\
&f\in W^{1,1}(\Om)\cap L^\infty(\Om)\quad\implies\quad\mu_{opt}\in W^{1,1}(\Om).
\end{split}\]
In higher dimension. Furthermore, the correspondence between the optimization and transport problems is unclear when the function $\psi$ is nonlinear.
\end{Rem}

\begin{Exa}\label{Ex1}
Taking $\psi(s)=s^2/2$ and $f\in W^{-1,4/3}(\Om)$, we obtain the auxiliary variational problem
$$\min\Big\{\int_\Om\Big(\frac14|\nabla u|^4-f(x)u\Big)\,dx\ :\ u\in W^{1,4}_0(\Om)\Big\},$$
or equivalently the nonlinear PDE
$$-\Delta_4u=f,\qquad u\in W^{1,4}_0(\Om),$$
whose unique solution $\bar u$ provides the optimal coefficient $a_{opt}(x)=|\nabla\bar u(x)|^2$. For instance, if $\Om$ is the unit ball, and $f=1$ we obtain
$$\bar u(x)=\frac{3}{4d^{1/3}}\big(1-|x|^{4/3}\big),\qquad a_{opt}(x)=\frac{|x|^{2/3}}{d^{2/3}}.$$
Conversely, taking $\Om$ the unit disc in $\RR^2$ and $f=\delta_0$ the unit Dirac mass at the origin, gives
$$\bar u(x)=\frac{3}{(16\pi)^{1/3}}(1-|x|^{2/3}),\qquad a_{opt}(x)=(2\pi|x|)^{-2/3}.$$
\end{Exa}

\begin{Exa}\label{Ex2}
Taking
\beq\label{psco}\psi(s)=\begin{cases}
\gamma s&\hbox{if }\alpha\le s\le\beta\\
+\infty&\hbox{otherwise,}
\end{cases}\eeq
with $0<\alpha<\beta$, $\gamma>0$, we have the auxiliary variational problem
$$\min\Big\{\int_\Om\frac{|\nabla u|^2-\gamma}{2}\Big(\beta1_{|\nabla u|^2\ge\gamma}+\alpha1_{|\nabla u|^2\le\gamma}\Big)-f(x)u\,dx\ :\ u\in H^1_0(\Om)\Big\},$$
whose unique solution $\bar u$ provides the optimal coefficient $a_{opt}\in L^\infty(\Om)$. It has been proved in \cite{Cas} (see also \cite{Cas2}) that, when $\Om$ is of class $C^{1,1}$ and $f\in L^2(\Om)$, then $\bar u$ is in $H^2(\Om)$ and $\nabla a_{opt}\cdot\nabla \bar u$ belongs to $L^2(\Om)$.
\end{Exa}

Another case where \eqref{pbor} reduces to a variational problem is the minimization of the energy, corresponding to
$$j(x,u)=-f(x)u.$$
Similarly to (\ref{formco}), the problem becomes
\beq\label{pbcsr}
\min_{a\ge0}\min_{u\in H^1_0(\Om)}\int_\Om\Big(\frac12 a(x)|\nabla u|^2-f(x)u+\frac12\psi(a)\Big)\,dx.
\eeq
which, computing the minimum in $a$, can be written as
\beq\label{pbume}\min_{u\in H^1_0(\Om)}\int_\Om\Big(-{1\over 2}\psi^\ast\big(-|\nabla u|^2\big)-fu\Big)\,dx.\eeq

Imposing that
$$\lim_{s\to 0^+}s\psi(s)=\infty,$$
the functional becomes coercive over $W^{1,1}_0(\Om)$. Assuming also that $\psi$ is decreasing and that $\psi(1/s)$ is convex, we have that the functional in (\ref{pbume}) is convex. Therefore, under these assumptions and taking $f$ smooth enough, problem (\ref{pbume}) has a solution in $W^{1,1}_0(\Om)$ (it is not necessarily in $H^1_0(\Om)$). However, in other situations this functional is not convex and then (\ref{pbume}) may not have a solution. To avoid this difficulty it is necessary to deal with a relaxed problem formulation consisting in replacing the function $\xi\in\RR^d\mapsto -\psi(-|\xi|^2)$
by its convex hull. 

\begin{Exa}\label{Ex3}
Related to Example \ref{Ex1}, we take $\psi(s)=1/(2s^2)$. Then problem (\ref{pbume}) becomes
$$\min_{u\in W^{1,{4\over 3}}_0(\Om)}\int_\Om\Big({3\over 4}|\nabla u|^{4\over 3}-fu\Big)\,dx,$$
which has a unique solution $\overline u$ if $f$ is in $W^{-1,4}(\Om)$. The optimal control is given by
$a_{opt}=|\nabla \overline u|^{-2/3}$. In this way, if $\Om$ is the unit ball in $\RR^d$ and $f=1$, we get
$$\overline u(x)={1-|x|^4\over 4d^3},\qquad a_{opt}(x)={d^2\over |x|^2}.$$
\end{Exa}
\begin{Exa}\label{Ex4} 
Taking
\beq\label{envconv}
\psi(s)=\begin{cases}
\gamma(\beta-s)&\text{if }s\in[\alpha,\beta]\\
+\infty&\text{otherwise,}
\end{cases}\eeq
 with $0<\alpha<\beta$, $\gamma>0$, we have 
 $$-\psi^\ast(-|\xi|^2)=\begin{cases}
\beta|\xi|^2&\text{if }|\xi|^2\le\gamma\\
\alpha|\xi|^2+\gamma(\beta-\alpha)&\text{if }|\xi|^2>\gamma,
\end{cases}$$
which is not convex. Computing its convex hull (see e.g. \cite{GoKoRe}) we get the relaxed formulation
\beq\label{pbume2}\min_{u\in H^1_0(\Om)}\int_\Om\Big({1\over 2}\phi\big(|\nabla u|\big)-fu\Big)\,dx,\eeq
with
$$\phi(s)=\begin{cases}
\beta s^2&\text{if }\dis s^2\le{\alpha\over\beta}\gamma\\
2\sqrt{\alpha\beta\gamma}\,s-\alpha\gamma&\text{if }\dis{\alpha\over\beta}\gamma\le s^2\le{\beta\over\alpha}\gamma\\
\alpha s^2+\gamma(\beta-\alpha)&\text{if }\dis{\beta\over\alpha}\gamma\le s^2.
\end{cases}$$
The Euler-Lagrange equation for \eqref{pbume2} proves that for a given solution $\overline u$, the associated optimal \lq\lq relaxed control\rq\rq\space 
 is given by
$$a_{opt}={\phi'\big(|\nabla \overline u|\big)\over 2|\nabla\overline u|}.$$
It can be proved that this optimal relaxed control coincides with the optimal relaxed control defined in the following section. Moreover, although $a_{opt}$ and $\nabla \overline u$ may not be unique, the function $\overline\sigma=a_{opt}\nabla\overline u$ is unique.\par
Taking $\Om$ the unit ball in $\RR^d$, $f=1$, $\gamma<1/(d^2\alpha\beta)$, and denoting $\tau=d\sqrt{\alpha\beta\gamma}$, we have
$$\overline u(x)=\left\{\ba{ll}\dis {(1-\tau^2)(\beta-\alpha)\over 2d\alpha\beta}+{1-|x|^2\over 2d\beta} &\hbox{ if }|x|<\tau\\ \ecart\dis
{1-|x|^2\over 2d\alpha} &\hbox{ if }|x|>\tau,\ea\right.\qquad a_{opt}(x)=\left\{\ba{ll}\dis \beta &\hbox{ if }|x|<\tau\\ \ecart\dis
\alpha &\hbox{ if }|x|>\tau.\ea\right.$$
\end{Exa}

\begin{Rem}\label{regul}
For $\psi$ given by (\ref{envconv}), it has been proved in \cite{Cas} that $\Om\in C^{1,1}$ and $f\in L^2(\Om)$ imply that $\overline u$ is in $H^2(\Om)^d$ and that the derivatives of $a_{opt}$ in the orthogonal directions to $\sigma$ are in $L^2(\Om)$.
\end{Rem}

\section{The general problem.}

In the case of a general optimal control problem of the form
\beq\label{pbge}
\min_{a\ge0}\min_{u\in H^1_0(\Om)}\Big\{\int_\Om\big(j(x,u)+\psi(a)\big)\,dx\ :\ u\hbox{ solves \eqref{pde}}\Big\},
\eeq
the existence of an optimal coefficient $a_{opt}$ may fail, and a solution exists only in a {\it relaxed} sense. For $0<\alpha<\beta$, a counterexample to the existence of a solution $a_{opt}$ can be found in \cite{Mur}, where
$$\psi(s)=\begin{cases}
0&\hbox{if }\alpha\le s\le\beta\\
+\infty&\hbox{otherwise,}
\end{cases}$$
and
$$j(x,s)=|s-u_0(x)|^2$$
for a suitable function $u_0$. A different counterexample is illustrated in Section \ref{counter}.

In order to understand what relaxed solutions are, we have to recall the notion of $G$-convergence, introduced by De Giorgi and Spagnolo in \cite{DGS}: a sequence $a_n(x)$ of functions between $\alpha$ and $\beta$ is said to $G$-converge to a symmetric $d\times d$ matrix $A(x)$ if for every $f\in L^2(\Om)$ the solutions $u_n$ of the PDEs
$$-\dive\big(a_n\nabla u_n\big)=f,\qquad u_n\in H^1_0(\Om)$$
converge in $L^2(\Om)$ to the solution $u$ of the PDE
\beq\label{ecmat}-\dive\big(A\nabla u\big)=f,\qquad u\in H^1_0(\Om).\eeq
The question becomes now to characterize the $G$-closure $\overline\A$ of the set of coefficients $a_n$. A complete answer has been given by Murat and Tartar in \cite{MT}, \cite{Tar} (see also \cite{LC} for the two-dimensional case). They proved that the $G$-closure $\overline\A$ above consists of all symmetric $d\times d$ matrices $A(x)$ whose eigenvalues $\lambda_1(x)\le\lambda_2(x)\le\dots\le\lambda_d(x)$ are between $\alpha$ and $\beta$ and satisfy for a suitable $t\in[0,1]$ (depending on $x$) the following $d+2$ inequalities:
$$\begin{cases}
\dis\sum_{1\le i\le d}\frac{1}{\lambda_i-\alpha}\le\frac{1}{\nu_t-\alpha}+\frac{d-1}{\mu_t-\alpha}\\
\dis\sum_{1\le i\le d}\frac{1}{\beta-\lambda_i}\le\frac{1}{\beta-\nu_t}+\frac{d-1}{\beta-\mu_t}\\
\nu_t\le\lambda_i\le\mu_t\qquad i=1,\dots,d,
\end{cases}$$
being $\mu_t$ and $\nu_t$ respectively the arithmetic and the harmonic mean of $\alpha$ and $\beta$, namely
$$\mu_t=t\alpha+(1-t)\beta,\qquad\nu_t=\Big(\frac{t}{\alpha}+\frac{1-t}{\beta}\Big)^{-1}.$$
For instance, when $d=2$, the set above is given by the symmetric $2\times2$ matrices $A(x)$ whose eigenvalues $\lambda_1(x)$ and $\lambda_2(x)$ are between $\alpha$ and $\beta$ and
$$\frac{\alpha\beta}{\alpha+\beta-\lambda_1(x)}\le\lambda_2(x)\le\alpha+\beta-\frac{\alpha\beta}{\lambda_1(x)}\;.$$

\begin{figure}[h!]
\centering
{\includegraphics[scale=1.0]{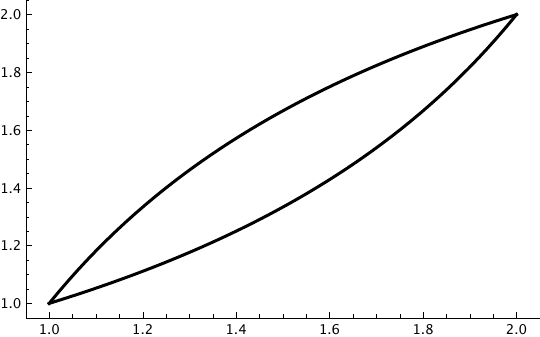}}
\caption{Attainable matrices, in the plane $(\lambda_1,\lambda_2)$, for $d=2$, $\alpha=1$, $\beta=2$.}
\end{figure}

As far as we know, an explicit form of the relaxation
$$\Psi(A)=\inf_{a_n\to_G A}\liminf_n\int_\Om\psi(x,a_n)\,dx;$$
with a general function $\psi(x,a)$ is not known. The case $\psi(x,a)=g(x)a$ has been considered in \cite{Cab} and \cite{CDM}. For instance, if $\psi$ is given by \eqref{psco}, denoting by $\overline\A$ the $G$-closure described above, the relaxation $\Psi(A)$ is given by
$$\Psi(A)=\begin{cases}\dis
\int_\Om\gamma\lambda_{max}\big(A(x)\big)\ dx &\text{if }A\in\overline\A\\
+\infty&\text{otherwise,}
\end{cases}$$
being $\lambda_{max}(A)$ the largest eigenvalue of the $d\times d$ symmetric matrix $A$. Namely, taking into account that the solution of the state equation 
$$-\dive\big(A(x)\nabla u\big)=f,\qquad u\in H^1_0(\Om),$$
does not vary if we replace $A(x)$ by another matrix function $B(x)$ such that
$$A(x)\nabla u=B(x)\nabla u\qquad\text{a.e. in }\Om,$$
and that for every $\xi\in\RR^d$ one has
$$\big\{A\xi: \ A\in \overline\A\big\}= \big\{\eta\in\RR^d: \ (\eta-\nu_t\xi)\cdot(\eta-\mu_t\xi)\leq 0\big\},$$
we have that a relaxation of (\ref{pbge}) with $\psi(x,a)=g(x)a$ is given by
\beq\label{pbrege}\min_{\nu_tI\leq A\leq \mu_tI\atop 0\leq t\leq 1} \min_{u\in H^1_0(\Om)}\Big\{\int_\Om\big(j(x,u)+g(x)\mu_t\big)\,dx\ :\ u\hbox{ solves \eqref{ecmat}}\Big\},\eeq
where $I$ denotes the identity matrix.

As an example, we can consider the energy problem $j(x,u)=-f(x)u$ with $\psi$ given by (\ref{envconv}). Then, taking into account (\ref{pbcsr}) and (\ref{pbrege}) the relaxed problem can be written as
\beq\label{pbregeE}\min_{ 0\leq t\leq 1} \min_{u\in H^1_0(\Om)}\Big\{\int_\Om\big({\nu_t\over 2}|\nabla u|^2-f(x)u-\gamma\mu_t\big)\,dx\Big\}.\eeq
Using the minimum in $t$ this proves again that $u$ solves (\ref{pbume2}).\par

Thanks to (\ref{pbrege}) we obtain a system of optimality conditions. For this purpose, we assume the function $j(x,s)$ derivable with respect to $s$ with appropriate growth conditions.

Take $(t_{opt},A_{opt})$ an optimal solution of the relaxed problem, then for any admissible control $(t,A)$ and $0\le\ep\le1$ the control
$$\big(t_{opt}+\ep(t-t_{opt}), A_{opt}+\ep (A-A_{opt})\big)$$
is also admissible. Using it and deriving with respect to $\ep$ we conclude that
\beq\label{proma}A_{opt}\nabla \bar u\cdot\nabla \bar p+g(\beta-\alpha)t_{opt}=\max_{\nu_tI\leq A\leq \mu_tI\atop 0\leq t\leq 1}\Big\{A_{opt}\nabla \bar u\cdot\nabla\bar p-g(\beta-\alpha)t_{opt}\Big\},\eeq
with $\bar u,\bar p$ the state and adjoint state functions, solutions of
\beq\label{deesda}
-\dive\big(A_{opt}\nabla \bar u\big)=f,\quad -\dive\big(A_{opt}\nabla \bar p\big)=\partial_sj(x,\bar u),\qquad \bar u,\bar p\in H^1_0(\Om).
\eeq
Computing the maximum in \eqref{proma}, we obtain the optimality conditions (see for instance \cite{All}, \cite{MT})
\beq\label{conopg1}
\begin{cases}
\dis A_{opt}\nabla\bar u={\mu_{t_{opt}}+\nu_{t_{opt}}\over 2}\nabla\bar u+
{\mu_{t_{opt}}-\nu_{t_{opt}}\over2}{|\nabla\bar u|\over|\nabla\bar p|}\nabla \bar p&\text{a.e. in }\{\nabla\bar p\not=0\}\\
\dis A_{opt}\nabla\bar p={\mu_{t_{opt}}+\nu_{t_{opt}}\over 2}\nabla\bar p+
{\mu_{t_{opt}}-\nu_{t_{opt}}\over 2}{|\nabla\bar p|\over |\nabla\bar u|}\nabla \bar u&\text{a.e. in }\{\nabla\bar u\not=0\},
\end{cases}\eeq
\beq\label{conopg2}t_{opt}=
\begin{cases}
0&\dis\text{if }g<N^+-{\beta\over\alpha}N^-\\
\dis{1\over\beta-\alpha}\Big(\sqrt{\alpha\beta N^-\over N^+-g}-\alpha\Big)&\dis\text{if }N^+-{\beta\over\alpha}N^-\le g\le N^+-{\alpha\over\beta}N^-\\
1&\dis\text{if }N^+-{\alpha\over\beta}N^-< g,
\end{cases}\eeq
with
\beq\label{N+N-}
N^+={|\nabla u||\nabla p|+\nabla u\cdot\nabla p\over 2},\qquad N^-={|\nabla u||\nabla p|-\nabla u\cdot\nabla p\over 2}.
\eeq

\section{Nonexistence of an optimal coefficient}\label{counter}

In this section we provide a counterexample to the existence of an optimal coefficient $a_{opt}$ for \eqref{pbor}. We take $\Om=B(0,1)$ the unit ball in $\RR^d$, and
\beq\label{Ex4}
\begin{cases}
f=1\text{ the right-hand side,}\\
j(x_1,...,x_d,s)=(1+\ep x_1)s\text{ with }\ep>0,\\
\dis\psi(s)=\begin{cases}
\tau^2 s&\text{if }s\in[1,2]\\
+\infty&\text{otherwise}\end{cases}
\qquad\hbox{ with }0<\tau<{1\over d}.
\end{cases}\eeq
Let us prove the existence of $\ep_0>0$ such that \eqref{pbor} has no solution for $0<\ep<\ep_0$.

First we observe that for $\ep=0$ problem (\ref{pbor}) is a particular case of the compliance problem considered in Section \ref{secPar}. By \eqref{foruCo} we get that the state function $u_0$ associated to an optimal control $a_0$ is the unique solution of \eqref{foruCo} with
$$\psi^\ast(s)=\begin{cases}
s-\tau^2&\text{if }s<\tau^2\\
2(s-\tau^2)&\text{if }s>\tau^2.
\end{cases}$$
By uniqueness $u_0$ is invariant by rotations and then is a radial function $u_0(r)$. Combined with \eqref{conopC}, this implies
\beq\label{up0a0} u'_0(r)=\begin{cases}
\dis-{r\over d}&\text{if }r<d\tau\\
-\tau&\dis\text{if }d\tau\le r\le2d\tau\\
\dis-{r\over2d}&\text{if }2\tau d<r,\end{cases}
\qquad a_0(r)=\begin{cases}
1&\text{if }r<d\tau\\
\dis{r\over d\tau}&\text{if }d\tau<r<2d\tau\\
2&\text{if }r>2d\tau.
\end{cases}\eeq

On the other hand, recalling that the solution $u$ of the state equation in \eqref{pde} satisfies $a\nabla u=\sigma$, with $\sigma$ the solution of
$$\min\Big\{\into {|\zeta|^2\over a}\,dx\ :\ -\dive\zeta=1\Big\},$$
and that
$$\into u\,dx=\into{|\sigma|^2\over a}\,dx,$$
we get that \eqref{pbor} with $\ep=0$ is also equivalent to
$$\min_{1\leq a\leq 2}\min_{-\dive\sigma=1}\into \Big({|\sigma|^2\over a}+\tau^2a\Big)\,dx.$$
Taking the minimum with respect to $a$, this provides
$$a(x)=\begin{cases}
1&\text{if }|\sigma(x)|<\tau\\
\dis{|\sigma(x)|\over\tau}&\text{if }\tau\le|\sigma(x)|\le2\tau\\
2&\text{if }|\sigma(x)|>2\tau,
\end{cases}$$
with $\sigma$ a solution of
\beq\label{pbmisi}
\min_{-\dive\sigma=1}\into\Upsilon(|\sigma|)\,dx,\qquad\Upsilon(s)=
\begin{cases}
s^2+\tau^2&\text{if }0\le s<\tau\\
2\tau s&\text{if }\tau\le s\le2\tau\\
\dis{s^2\over2}+2\tau^2&\text{if }s>2\tau.
\end{cases}\eeq
By what proved above, this problem has a unique solution $\sigma_0 (x)=-x/d$.

Let us now prove the non-existence result. Arguing by contradiction, we assume there exist $\ep_k>0$ tending to zero and $a_{\ep_k}$ solution of \eqref{pbor}. We denote by $u_{\ep_k}$ the corresponding state function. Since the state equation does not depend on $\ep$, we have
\beq\label{desueu0}
\into\big((1+\ep_k x_1)u_{\ep_k}+\tau^2a_{\ep_k}\big)\,dx\le\into \big((1+\ep_k x_1)u_0+\tau^2a_0\big)\,dx.
\eeq 
Using also
$$\into\big(u_0+\tau^2a_0)\,dx=\into\Upsilon(|\sigma_0|)\,dx,$$
and that $\sigma_0$ solves \eqref{pbmisi}, we deduce from \eqref{desueu0}
\beq\label{despue}
0\le\into\big(\Upsilon(|\sigma_\ep|)-\Upsilon(|\sigma_0|)\big)dx\le\ep\into x_1(u_0-u_\ep)\,dx.
\eeq

Taking into account that $u_{\ep_k}$ solves \eqref{pde} with $a=a_{\ep_k}$, we know (\cite{DGS}) that there exist a subsequence, still denoted by $\ep_k$, and $A\in L^\infty(\Om)^{d\times d}$ symmetric such that
$$I\le A\le2I\text{ a.e. in }\Om,\qquad a_{\ep_k}I\stackrel{G}\rightharpoonup A,$$
$$u_{\ep_k}\rightharpoonup u\text{ in }H^1_0(\Om),\qquad\sigma_{\ep_k}\rightharpoonup A\nabla u\text{ in }L^2(\Om)^d,\qquad-\dive(A\nabla u)=1\text{ in }\Om.$$
Coming back to \eqref{despue} and taking into account the convexity of $\Upsilon$, we have that
$$\into\Upsilon(|A\nabla u|)\,dx=\min_{-\dive\zeta=1}\into\Upsilon(|\zeta|)\,dx.$$
Thus, $A\nabla u$ is a solution of \eqref{pbmisi} and then $u$ is a solution of \eqref{pbor}. By uniqueness this proves that
$$u=u_0,\qquad A=a_0I,\qquad\sigma=\sigma_0.$$
Now, we consider $p_\ep$ the adjoint state defined as the solution of
$$\begin{cases}
-\dive(a_{\ep_k}\nabla p_{\ep_k})=1+\ep_k x_1&\text{in }\Om\\
p_{\ep_k}=0&\text{on }\partial\Om.
\end{cases}$$
Then $z_{\ep_k}=(p_{\ep_k}-u_{\ep_k})/ \ep_k$ solves
$$\begin{cases}
-\dive(a_{\ep_k}\nabla z_{\ep_k})= x_1&\text{in }\Om\\
z_{\ep_k}=0&\text{on }\partial\Om.
\end{cases}$$
By the $G$-convergence of $a_{\ep_k} I$, we have
$$z_{\ep_k}\rightharpoonup z\text{ in }H^1_0(\Om),\qquad a_{\ep_k}\nabla z_{\ep_k}\rightharpoonup A\nabla z\text{ in }L^2(\Om)^d,$$
with $z$ the solution of
$$\begin{cases}
-\dive(A\nabla z)=x_1&\text{in }\Om\\
z=0&\text{on }\partial\Om.
\end{cases}$$
Since we are assuming $a_{\ep_k}$ a solution of \eqref{pbor} and then of the relaxed problem \eqref{pbrege}, we deduce from \eqref{conopg1} that the matrix with columns $\nabla u_{\ep_k},\nabla p_{\ep_k}$ has rank one. By Morrey's theorem relative to the weak converges of the Jacobian (\cite{Mor}), which in our case reduces to
\[\begin{split}
&\partial_i u_{\ep_k}\partial_jz_{\ep_k}-\partial_iz_{\ep_k}\partial_j u_{\ep_k}=\partial_i(u_{\ep_k}\partial_jz_{\ep_k})-\partial_j(u_{\ep_k}\partial_iz_{\ep_k})\\
&\rightharpoonup\partial_i(u\partial_jz)-\partial_j(u_0\partial_iz)=\partial_i u\partial_jz-\partial_iz\partial_j u_0\quad\text{in }W^{-1,1}(\Om),
\end{split}\]
we have that $\nabla z$ is parallel a.e. to $\nabla u_0$, and that $z$ solves
$$\begin{cases}
-\dive\big(a_0\nabla z\big)=x_1&\text{in }\Om\\
z_0=0&\text{on }\partial\Om.
\end{cases}$$
However, the solution of this problems is given by $z(x)=h(|x|)x_1$ with $h$ the unique solution of
$$\begin{cases}
\dis-\big(a_0h'\big)'-{d+1\over r}a_0h'-{a'_0\over r}h=1&\text{in }(0,1)\\
\dis h(1)=0,\qquad\int_0^1r^{d+1}|h'|^2dr<\infty.
\end{cases}$$
Thus $z$ is not a radial function and $\nabla z$ is not parallel to $\nabla u_0$.
\cqfd

\section{Numerical simulations}\label{snum}
In this section we present some numerical experiments for the resolution of problems of the kind of (\ref{pbor}) in the 2-d case. We solve numerically the problems showed in Examples \ref{Ex1} and \ref{Ex2} for compliance optimization and the example given by problem (\ref{pbcsr}) with $\psi$ defined in (\ref{envconv}) for energy optimization. 

In the case of the compliance problem we put
$$J(a)=\int_\Om\big( f(x)u + \psi(a)\big)dx,$$
then, having in mind that for $u$ solution of the state equation (\ref{pde}), it is easy to compute that
$$\frac{d J(a)}{d a}\cdot\tilde a=\int_\Om\tilde a\left(\psi'(a)-|\nabla u|^2\right)\,dx.$$

We will apply a gradient descent method with projection into the appropriate subspace functions $a$ such that $\psi(a)<\infty$. For instance in Example \ref{Ex2} $\psi$ is finite where $a\in[\al,\be]$.
The algorithm is the following:
\begin{itemize}
\item Initialization: choose an admisible function $a_0$.
\item for $k\ge0$, iterate until convergence as follow:
\begin{itemize}
\item compute $u_k$ solution of (\ref{pde}) for $a=a_k$.
\item compute $\bar a_k=-( \psi'(a)- |\nabla u|^2)$ descent direction associated to $u_k$.
\item update the function $a_k$:
$$a_{k+1}=P_\psi(a_k+\epsilon_k \bar a_k)$$
where $P_\psi$ is a projection operator associated to the set $\{a: \psi(a)<\infty\}$, and where $\epsilon_k$ is small enough to ensure the decrease of the cost function.
\end{itemize}
\item Stop if convergence: $\frac{|J(a_{k})-J(a_{k-1)}|}{|J(a_0)|}<tol$, for $tol>0$ small. 
\end{itemize}
In the case of the energy problem, we use a similar algorithm based on formulation (\ref{pbregeE}).

On the other hand, we are interested also in showing the numerical evidence of the non-existence of an optimal coefficient for the general case. We propose to solve the relaxed formulation of \eqref{pbor}, given by \eqref{pbrege}, in $\Om=B(0,1)$ unit disc of $\RR^2$, and with the data given in (\ref{Ex4}). By convex minimization and following the system of optimality, we propose the following algorithm to compute $(t_{opt},A_{opt})$.

\begin{itemize}
\item Initialization: choose an admissible $(t_0,A_0)$ such that $ 0\leq t_0\leq 1$ and $\nu_tI\leq A_0\leq \mu_tI$.
\item For $k\ge0$, iterate until convergence as follows:
\begin{itemize}
\item compute the solutions $u_k$ and $p_k$ of (\ref{deesda}) for $A_{opt}=A_k$. Then, we define $N^+$, $N^-$ by (\ref{N+N-});
\item compute $\hat t$ given by (\ref{conopg2});
\item compute $\hat A$ defined by (\ref{conopg1}), considering $\hat u=u_k$, $\hat p=p_k$, $t_{opt}=\hat t$ and such that the spectrum of $\hat A$ belongs to $[\nu_{t_k}, \mu_{t_k}]$;
\item for $\ep_k\in(0,1]$, update the function $(t_k,A_k)$ as:
$$t_{k+1}=t_k+\ep_k(\hat t -t_k),\qquad A_{k+1}=A_k+\ep_k(\hat A -A_k).$$
\end{itemize}
\item Stop if convergence: $\frac{|J(t_{k+1},A_{k+1})-J(t_{k},A_{k})|}{|J(t_0,A_0)|}<tol$, for $tol>0$ small and $J$ corresponding with the cost function in (\ref{pbrege}) . 
\end{itemize}
We now show some numerical experiments based on the algorithms described above. The computation has been carried out using the free software FreeFem++ v4.5 (\cite{Hecht}, available in http://www.freefem.org). The picture of figures are made in Paraview 5.10.1 (available at https://www.kitware.com/open-source/\# paraview), which is free too. We use $P_1$-Lagrange finite element approximations for $u$ and $p$, solutions of the state and adjoint state equations respectively, and $P_0$-Lagrange finite element approximations for control variables, $a$ for scalar problems or $(t,A)$ for the matrix problems. For all simulations where the parameters $\al$ and $\be$ appear, we consider a normalized value $\al=1$, and $\be=2$. 

\begin{Exa}
We consider two cases in the framework of compliance optimization, i.e., $j(x,u)=f(x)u$. In Example \ref{Ex1} for $\psi(s)=s^2/2$ we provided an explicit solution when $\Om$ is the unit ball and $f\equiv1$. Here we solve numerically this problem in the non-radial case, considering the square $\Om=[0,1]^2$. In Figure \ref{Fig:Ex1} we show the computed optimal solutions, the optimal density $a$ on the left, and the optimal function $u$ on the right. 

\begin{figure}[!h]
\begin{minipage}[t]{7.4cm}
\centering
\includegraphics[width=1.\textwidth]{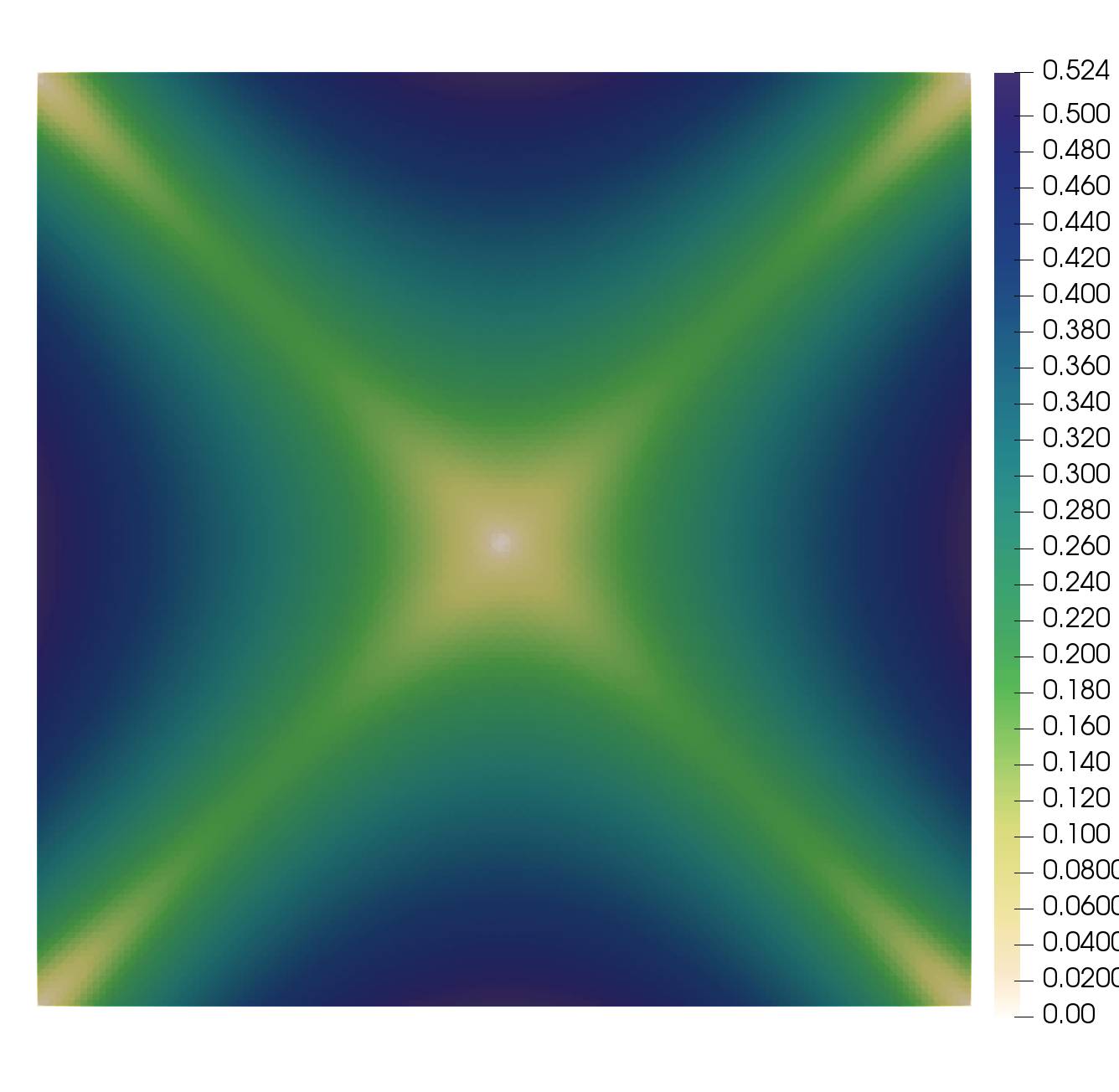}
\end{minipage}
\begin{minipage}[t]{7.4cm}
\centering
\includegraphics[width=1.\textwidth]{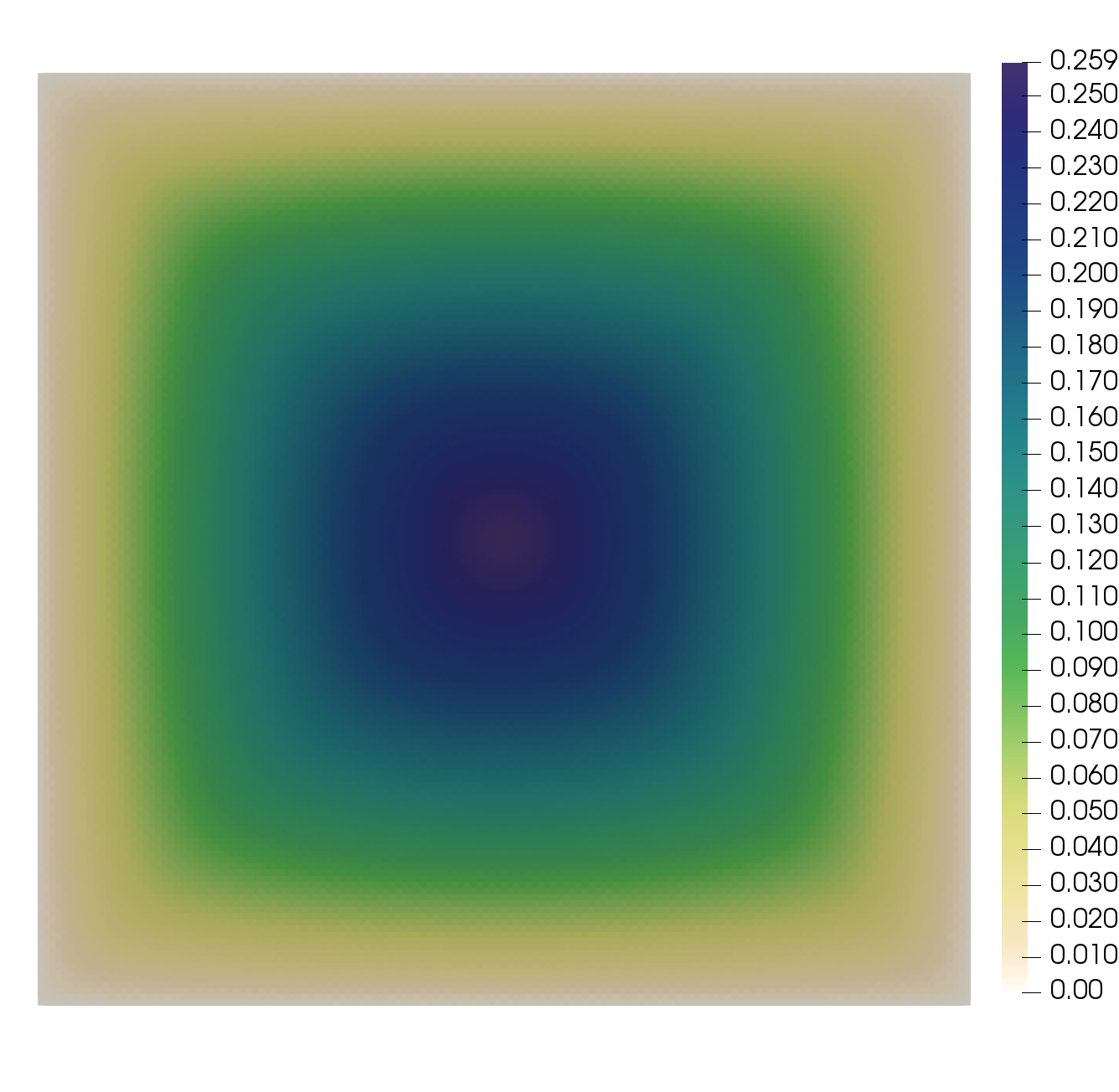}
\end{minipage}
\vskip-.5cm
\caption{Example 1.1 Optimal $a$ (left), and optimal $ u $ (right).}
\label{Fig:Ex1}
\end{figure}

The second case corresponds to $\psi$ given by \eqref{psco}. We solve numerically this problem in the cube $\Om=[0,1]^2$, and considering the Lagrange multiplier $\ga=0.01141$ in order to work with a volumen constraint of 50\% of each phase $\alpha$ and $\beta$. In Figure \ref{Fig:Ex2} we show the computed optimal solutions, the optimal density $a$ on the left, and optimal function $u$ on the right.

\begin{figure}[!h]
\begin{minipage}[t]{7.4cm}
\centering
\includegraphics[width=1.\textwidth]{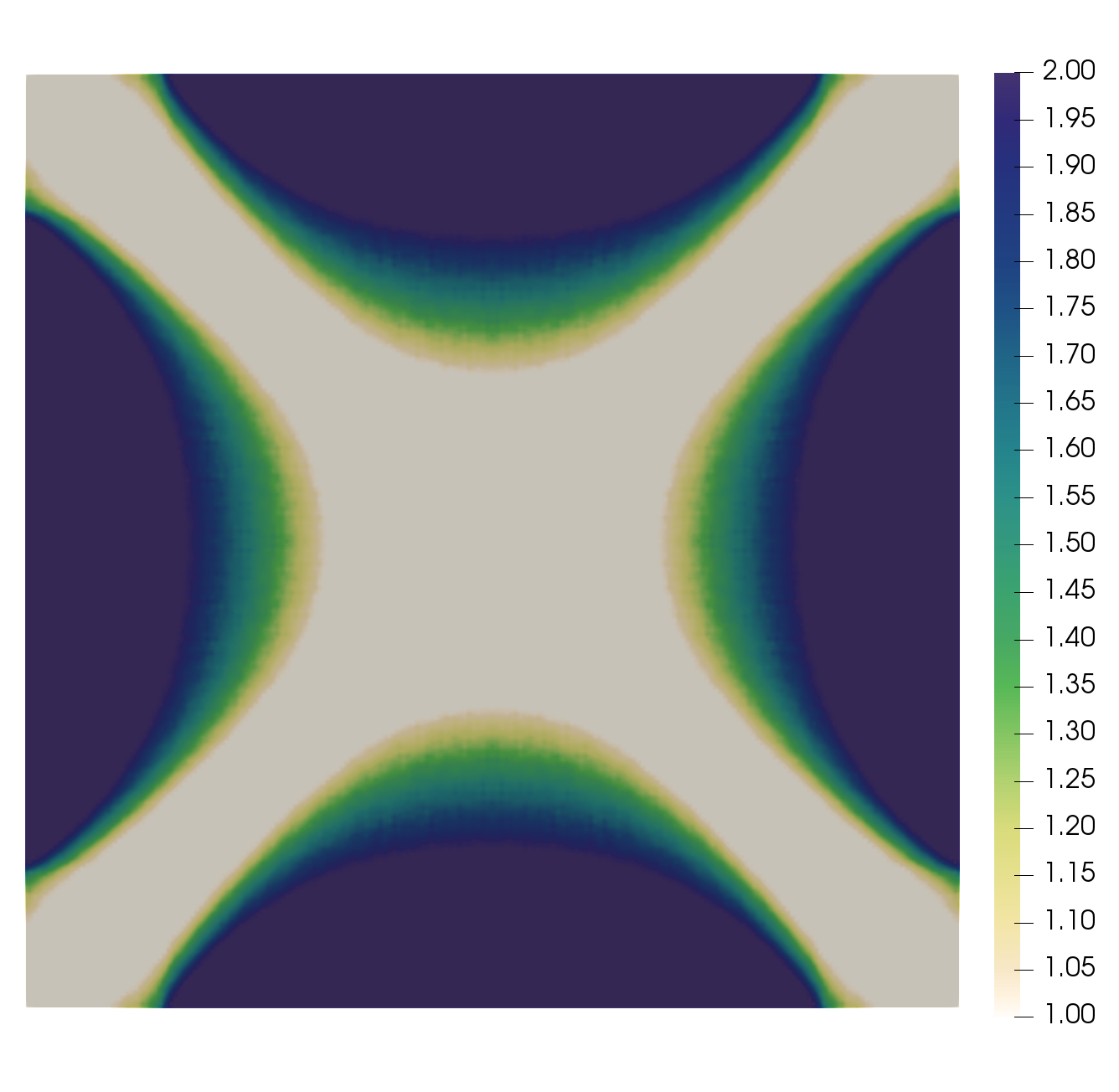}
\end{minipage}
\begin{minipage}[t]{7.4cm}
\centering
\includegraphics[width=1.\textwidth]{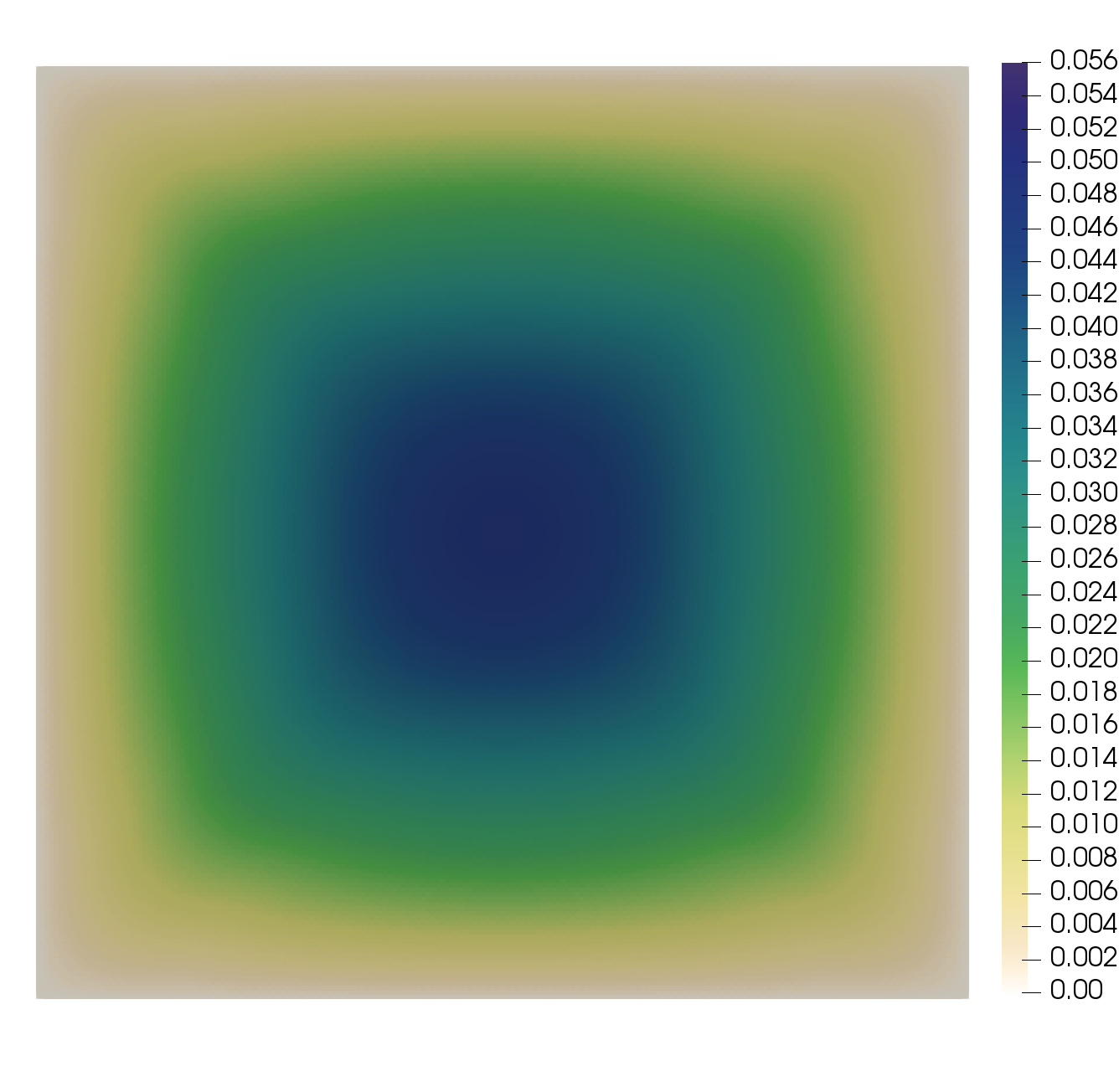}
\end{minipage}
\vskip-.5cm
\caption{Example 1.2 Optimal $a$ (left), and optimal $ u $ (right).}
\label{Fig:Ex2}
\end{figure}
\end{Exa}

\begin{Exa}
We consider a case in the framework of energy optimization, i.e., $j(x,u)= -f(x)u$. We assume $f\equiv1$ and $\psi$ defined as (\ref{envconv}) with $\ga=0.0142$ in order to assure a volumen constraint of 50\% of each phase $\alpha$ and $\beta$ and we deal with the relaxed formulation (\ref{pbregeE}). In Figure \ref{Fig:Ex3} we show the computed optimal solutions, the optimal density $a$ on the left, and optimal function $u$ on the right. 

\begin{figure}[!h]
\begin{minipage}[t]{7.4cm}
\centering
\includegraphics[width=1.\textwidth]{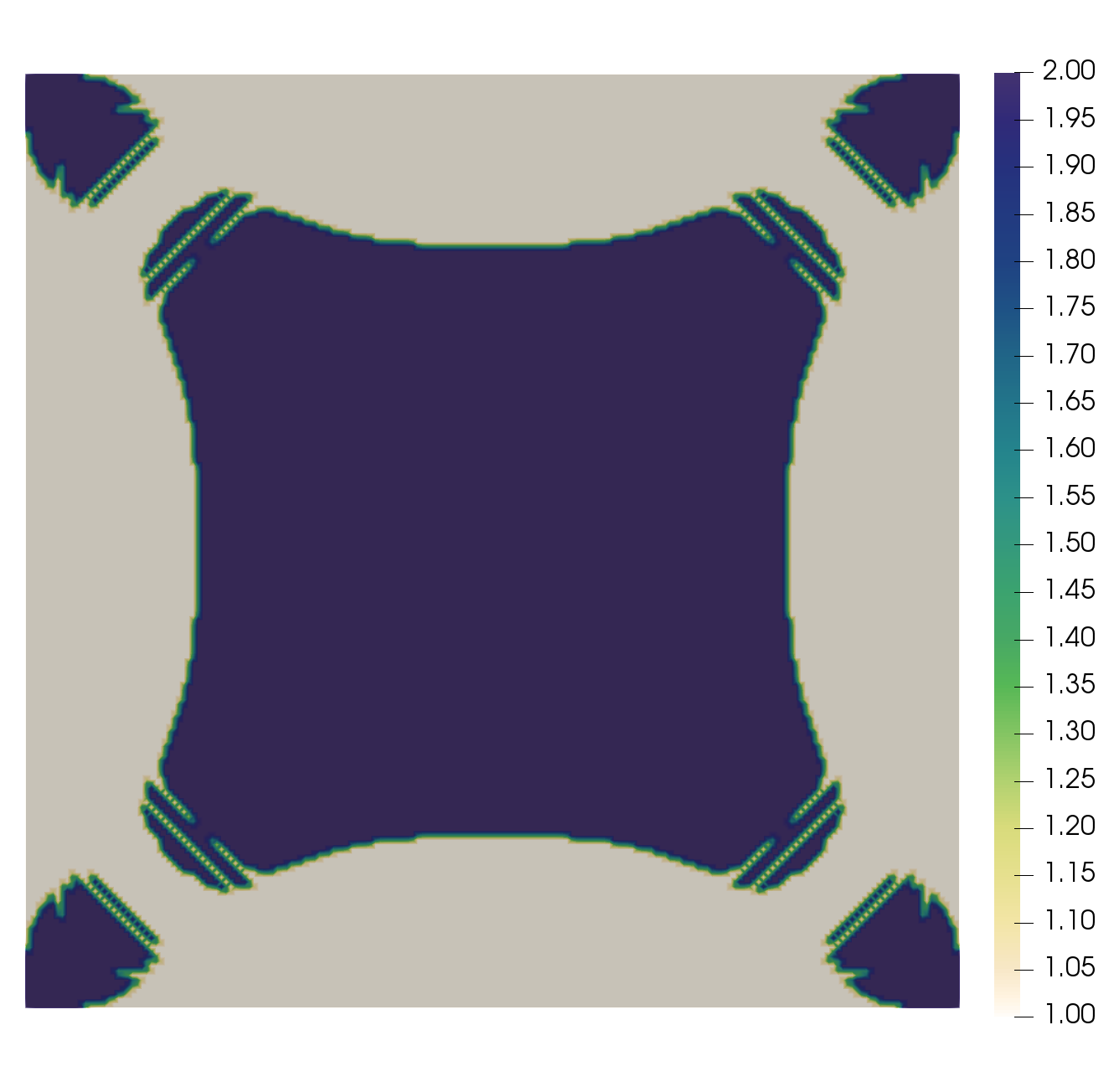}
\end{minipage}
\begin{minipage}[t]{7.4cm}
\centering
\includegraphics[width=1.\textwidth]{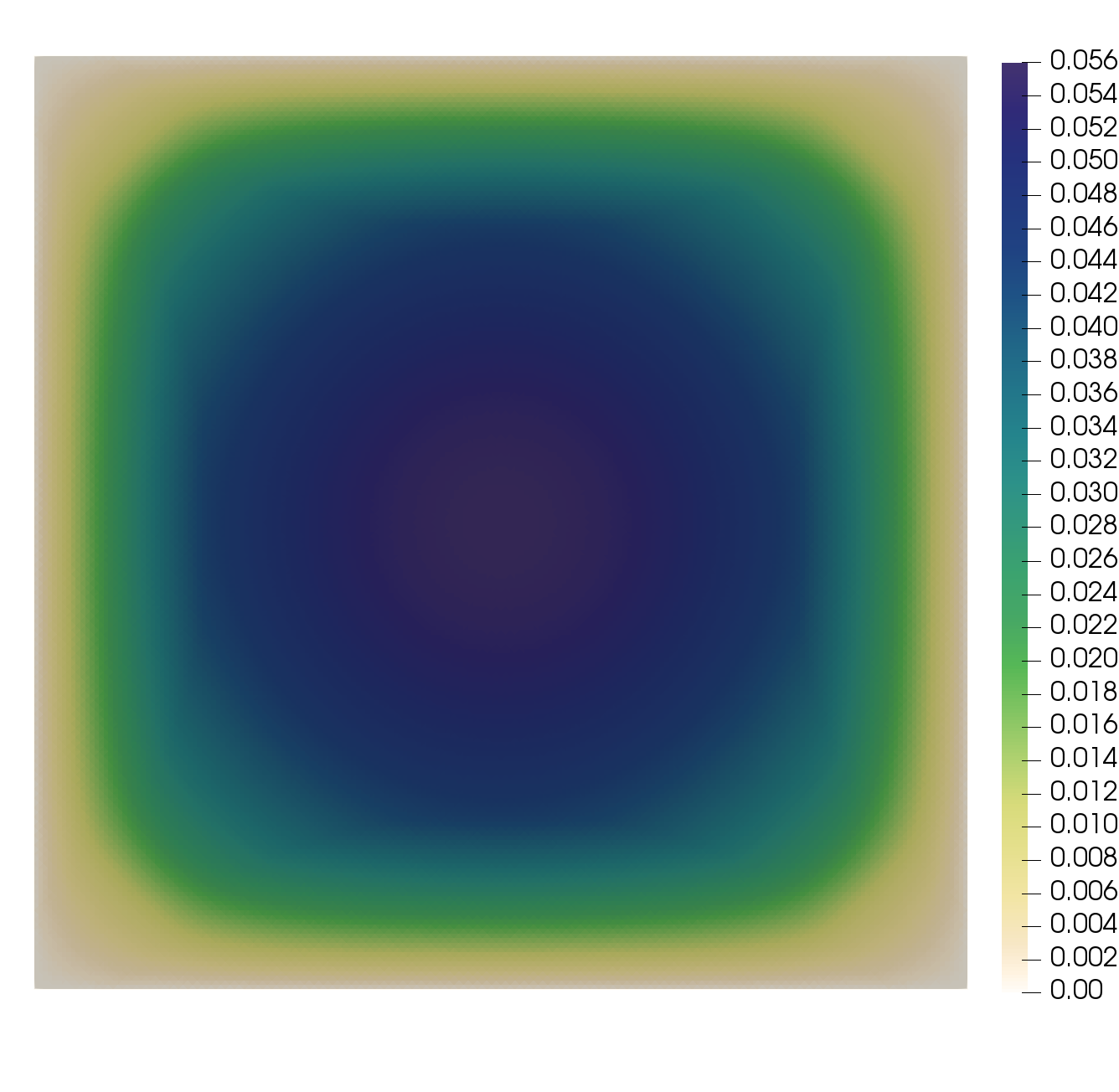}
\end{minipage}
\vskip-.5cm
\caption{Example 2. Optimal $a$ (left), and optimal $ u $ (right).}
\label{Fig:Ex3}
\end{figure}
\end{Exa}

\begin{Exa}
In the last case we show a numerical evidence of non existence of an optimal density $a_{opt}$ for problem of kind of (\ref{pbor}). We follow the counterexample showed in Section \ref{counter}. We consider $\Om=B(0,1)$ the unit ball in $\RR^2$ and the rest of data as in (\ref{Ex4}) with $\tau=0.23539$. We have solved the relaxed formulation of problem (\ref{pbor}) searching an optimal density $t_{opt}$ and an optimal matrix $A_{opt}$. Firstly, we have considered the problem with $\ep=0$. In this case, the computed optimal matrix is $A_{opt}=\mu_{t_{opt}} I$, a scalar matrix. We show in Figure \ref{Fig:Ex4} on the left the optimal value of $\mu_{t_{opt}}$ corresponding to $\ep=0$. As hoped, it agrees with the function $a_0$ defined by (\ref{up0a0}). On the other hand, we consider the problem with $\ep=0.5$. In this case the computed optimal matrix $A_{opt}$ is not scalar. We show in Figure \ref{Fig:Ex4} on the right the ratio $\lambda_1/\la_2$ with $\la_i, \,i=1,2$ the eigenvalues of $A_{opt}$. Observe that this ratio is not identically one, and therefore $A_{opt}$ is a non-isotropic matrix.

\begin{figure}[!h]
\begin{minipage}[t]{7.4cm}
\centering
\includegraphics[width=1.\textwidth]{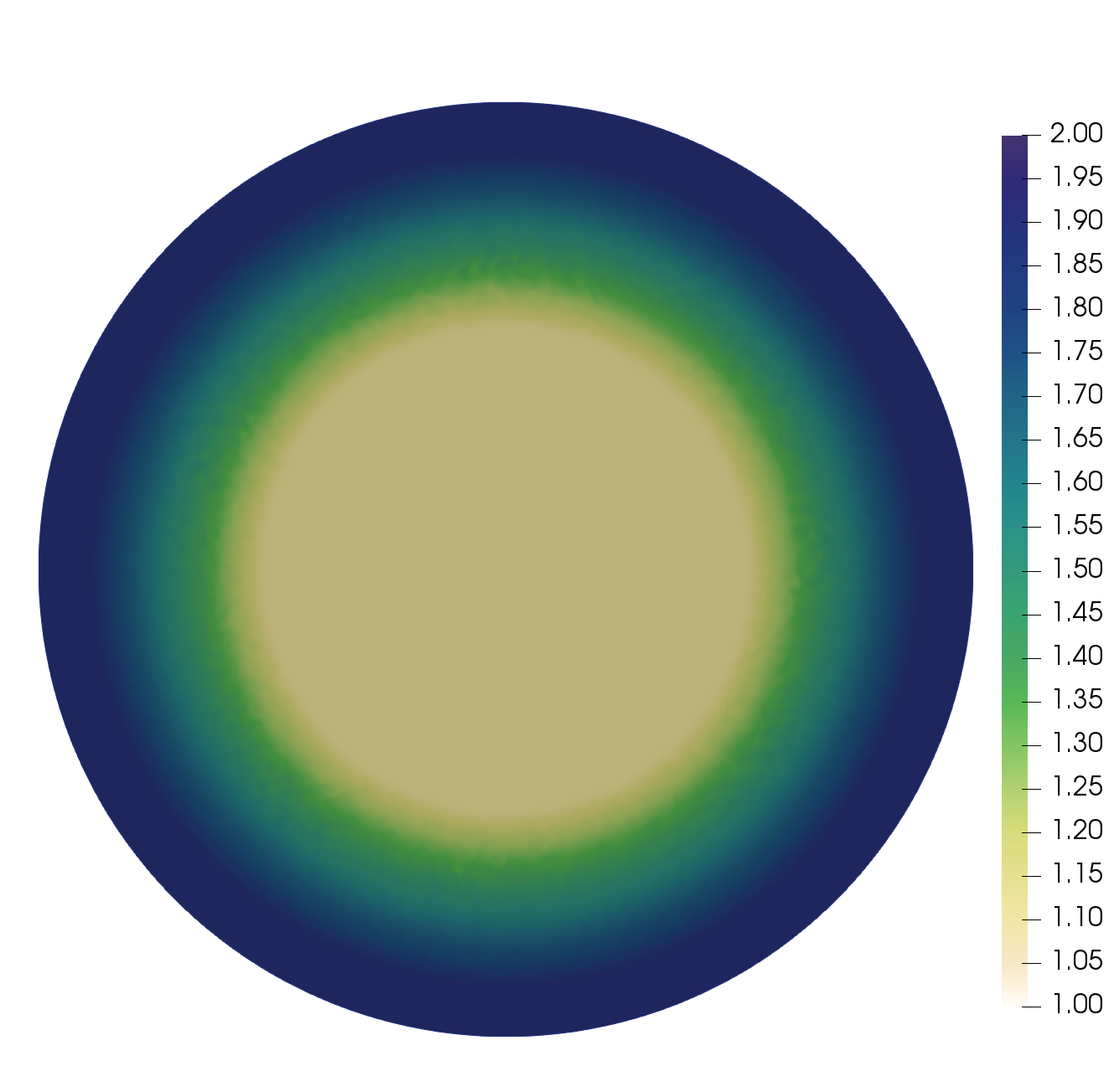}
\end{minipage}
\begin{minipage}[t]{7.4cm}
\centering
\includegraphics[width=1.\textwidth]{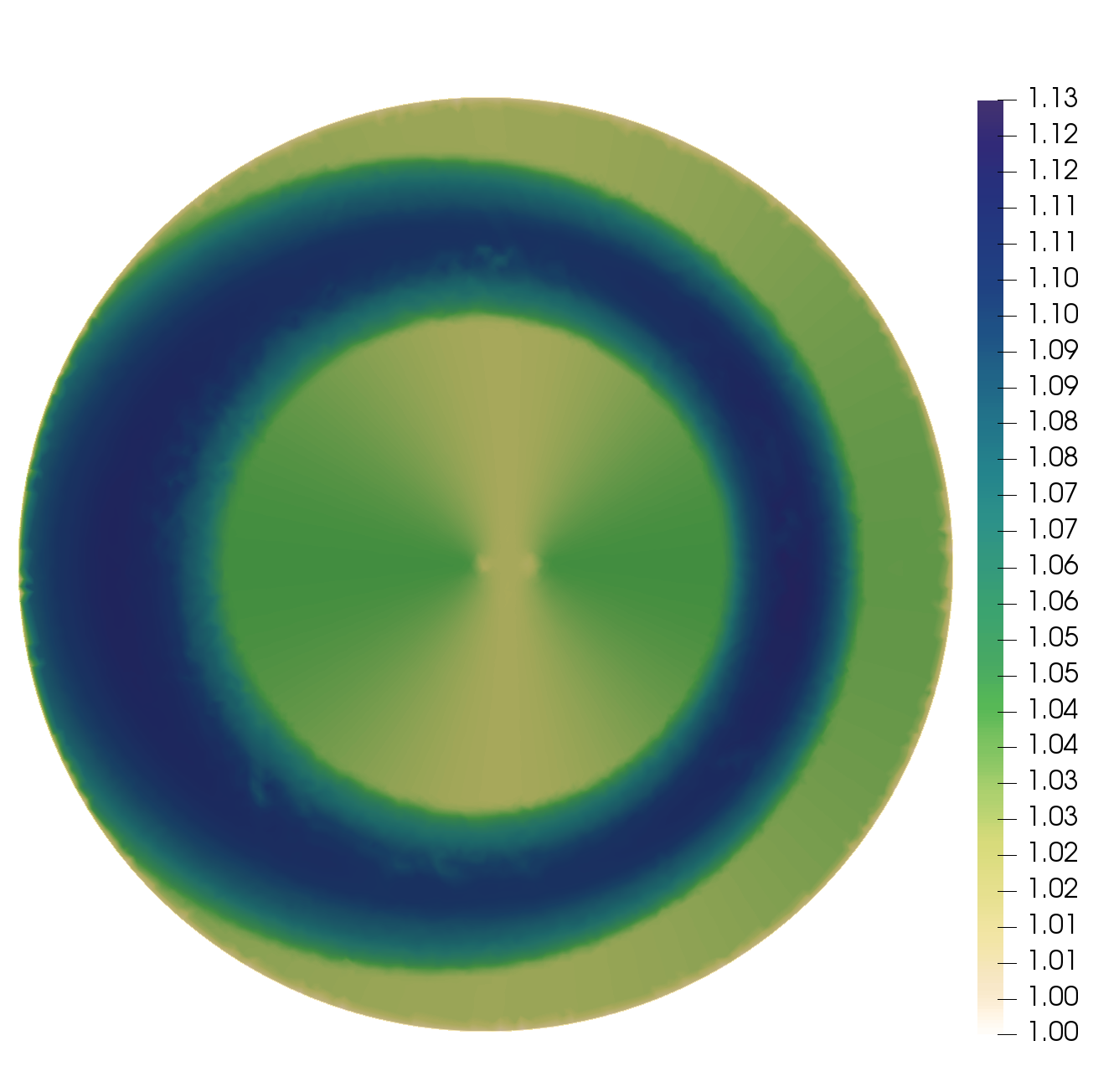}
\end{minipage}
\vskip-.5cm
\caption{Example 3, $\la_1=\la_2$ for $\ep=0$ (left), and $\la_2/\la_1$ for $\ep=0.5$.}
\label{Fig:Ex4}
\end{figure}
\end{Exa}

\noindent{\bf Acknowledgments. }The work of GB is part of the project 2017TEXA3H {\it``Gradient flows, Optimal Transport and Metric Measure Structures''} funded by the Italian Ministry of Research and University. GB is member of the Gruppo Nazionale per l'Analisi Matematica, la Probabilit\`a e le loro Applicazioni (GNAMPA) of the Istituto Nazionale di Alta Matematica (INdAM).\par 
The work of JCD and FM is a part of the FEDER project PID2020-116809GB-I00 of the {\it Ministerio de Ciencia e Innovaci\'on} of the government of Spain.
\bigskip


\end{document}